\newtheorem{theorem}{Theorem}[section]
\newtheorem {lemma}{Lemma}[section]
\newtheorem {proposition}{Proposition}[section]
\newtheorem {definition}{Definition}[section]
\newtheorem {example}{Example}[section]
\newtheorem {corollary}{Corollary}[section]
\newtheorem{remark}{Remark}
\newcommand{\D}{\displaystyle}
\title{Backward stochastic differential equations with stopping time as time horizon}
\author{ Munchol Kim, Cholkyu Pak}
\date{}
\begin{document}
\maketitle

\centerline{\small Department of Mathematics, Kim Il Sung
University, Pyongyang, D.P.R.Korea}

\begin{abstract} In this paper, we introduce a new method for study on backward stochastic differential equations with stopping time as time horizon.
 And using this, we show that some results on backward stochastic differential equations with constant time horizon are generalized to the case of random time horizon.
\end{abstract}

\vskip0.6cm\noindent {\bf Keywords}: BSDE(backward stochastic
differential equation), random time horizon, measure solution

\section{Introduction}
Backward stochastic differential equations(BSDEs) were first
introduced by E. Paradoux and S. Peng in 1990 [2]. Since then
BSDE¡¯s have been widely used in mathematical finance and partial
differential equations(PDEs). Many pricing problems can be written
in terms of linear BSDEs, or non-linear BSDEs when portfolios
constraints are taken into account as in El Karoui, Peng and
Quenez [10]. And numerous results show the intimate relationship
between BSDEs and PDEs, which suggests that existence and
uniqueness results which can be obtained on one side should have
their counterparts on the other side.([13,14, 15])

Many mathematicians have worked to improve the
existence/uniqueness conditions of a solution for BSDEs in
connection with the specific applications.([6, 7, 12])

\noindent Most of those works are concerned with the case of
constant time horizon.

But in many applications we encounter the case of random time
horizon. For example M.Marcus and L. Veron show that the solution
to PDE : $-\Delta u+u\lvert u\rvert^q=0$ are related to the BSDE :
$Y_t=\xi-\D\int_t^\tau Y_r\lvert Y_r\rvert^q dr-\D\int_t^\tau
Z_rdW_r$. Here the time horizon $\tau$ is a stopping time. And in
finance, there are many cases when the time horizon is not
constant but random. There are many research papers about the case
of random time horizon in connections with applications.([9,11])

But those works on the case of random time horizon is limited to
their specific cases and in many papers additional efforts are put
to generalize their results to the case of random time horizon. In
this paper, we suggest a general map from one side to the other
side in section 2(Theorem 2.1) and show some results using this as
examples in section 3. We note that our work is devoted to the
case of stopping time and it will meet the most of applications.

\section{Map from the case of constant time horizon to the case of random time horizon}

In this section, we suggest a general map from the case of
constant time horizon to the case of random time horizon.

Let $(W_t)_{t\ge0}$ be a standard one dimensional Brownian motion
on a complete probability space $(\Omega, \mathcal{F}_t,
\mathbb{P}, \mathbb{F}=(\mathcal{F}_t)_{t\ge0})$.

We will use the following notations:

\vskip 0.5cm
\indent\indent$\mathcal{L}_2^{loc}:=\{\Phi=\{\Phi(t)\}_{t\ge0}$:
real valued measurable processes adapted to the filtration
$(\mathcal{F}_t)_{t\ge0}$ such that $\forall T>0,
\D\int_0^T\Phi^2(t,\omega)dt<\infty\quad a.s.\}$ \vskip 0.2cm
\indent\indent$\mathcal{M}_2^{loc}:=\{X=(X(t))_{t\ge0}$: locally
square integrable $(\mathcal{F}_t)-$ martingales such that
$X_0=0\quad a.s.\}$ \vskip 0.2cm
\indent\indent$\mathcal{M}_2^{c,loc}:=\{X \in \mathcal{M}_2^{loc}
: t\rightarrow X_t$ is continuous $a.s.\}$

\vskip 0.5cm
\begin{definition}[Time Change]
Let $(\Phi(t,\omega))_{t\ge0}$ be a continuous real valued process
on $(\Omega, \mathcal{F}, \mathbb{P}, \mathbb{F})$ which satisfies
following conditions.
\vskip 0.2cm

(i) $(\Phi(t,\omega))$ is adapted to $(\mathcal{F}_t)_{t\ge0}$

(ii) $t\rightarrow \phi(t)$ is strictly increasing $a.s.$

(iii) $\D\lim_{t\rightarrow\infty}\phi_t=\infty\quad a.s.$

(iv) $\phi(0)=0\quad a.s.$ \vskip 0.2cm Then we call $\phi$ {\bf
time change} on $(\Omega, \mathcal{F}, \mathbb{P}, \mathbb{F})$.
\end{definition}

\vskip0.5cm
\begin{example}
If $\tau$ is a $(\mathcal{F}_t)-$stopping time,
$\phi(t,\omega)=\dfrac{t}{\tau},\;t\in [0,\infty]$ is a time
change on $(\Omega, \mathcal{F}, \mathbb{P}, \mathbb{F})$.
\end{example}

\vskip0.5cm From the definition, if $\phi$ is a time change, there
exist an inverse function almost surely and we have an inverse
process $\phi^{-1}(t,\omega)$. Then $\phi^{-1}_t(\omega)$ is a
stopping time for all $t$ because $\{\phi^{-1}_t(\omega)\le
s\}=\{t \le \phi_s(\omega)\}\in\mathcal{F}_s$.

\noindent And if $X=(X_t)$ is a continuous $(\mathcal{F}_t)-$
adapted process, then $T^\phi X=((T^\phi X)(t))=(X(\phi _t^{-1}))$
is a continuous $(\mathcal{F}_{\phi_t^{-1}})-$ adapted process.
Given a process of time change $\phi$, we define a new reference
family $(\tilde{\mathcal{F}}_t)$ by
$\widetilde{\mathcal{F}}_t=(\mathcal{F}_{\phi_t^{-1}}),\;t\in[0,\infty)$
. The class $\mathcal{M}_2^{c,loc}$ with respect to
$\widetilde{\mathcal{F}}_t$ is denoted by
$\mathcal{\widetilde{M}}_2^{c,loc}$. By Doob¡¯s optional sampling
theorem if $M\in\mathcal{M}_2^{c,loc}$ then
$\widetilde{M}\in\mathcal{M}_2^{c,loc}$. And
\begin{displaymath}
X,Y\in\mathcal{M}_2^{c,loc}\Rightarrow<X_{\phi^{-1}},Y_{\phi^{-1}}>=<X,Y>_{\phi^{-1}}
\end{displaymath}

So we can see that the characters of process are preserved in time
change.

\vskip0.5cm
\begin{lemma} If $\phi(t,\omega)$ is a time change on
$(\Omega, \mathcal{F}, \mathbb{P}, \mathbb{F})$,
$\phi^{-1}(t,\omega)$ is a time change on $(\Omega, \mathcal{F},
\mathbb{P},
\widetilde{\mathbb{F}}=(\widetilde{\mathcal{F}}_t)_{t\ge0})$.
\end{lemma}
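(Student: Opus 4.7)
The plan is to verify the four defining properties of a time change from Definition~2.1 for the process $\phi^{-1}$ with respect to the new filtration $\widetilde{\mathbb{F}}=(\mathcal{F}_{\phi^{-1}_t})_{t\ge 0}$. Three of the four properties (pathwise) are immediate consequences of elementary calculus applied to the pathwise inverse of a continuous strictly increasing bijection on $[0,\infty)$, so the only genuine content is the adaptedness statement.

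First I would observe that, almost surely, $t\mapsto\phi_t$ is a continuous, strictly increasing bijection from $[0,\infty)$ onto itself (strict monotonicity by (ii), surjectivity by $\phi_0=0$ and $\phi_t\to\infty$ together with continuity). Consequently $\phi^{-1}$ exists pathwise, is continuous, strictly increasing, satisfies $\phi^{-1}_0=0$, and tends to $\infty$ as $t\to\infty$. This disposes of conditions (ii), (iii), (iv). I would also record that $\widetilde{\mathbb{F}}$ is a bona fide filtration: since $s\le t$ implies $\phi^{-1}_s\le \phi^{-1}_t$, one has $\mathcal{F}_{\phi^{-1}_s}\subseteq\mathcal{F}_{\phi^{-1}_t}$, so $\widetilde{\mathcal{F}}_s\subseteq\widetilde{\mathcal{F}}_t$.

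The key step is condition (i), namely that $\phi^{-1}_t$ is $\widetilde{\mathcal{F}}_t=\mathcal{F}_{\phi^{-1}_t}$-measurable for each $t$. The paper has already shown, via $\{\phi^{-1}_t\le s\}=\{t\le\phi_s\}\in\mathcal{F}_s$, that $\phi^{-1}_t$ is an $(\mathcal{F}_s)$-stopping time. I would then invoke the standard fact that any stopping time $\tau$ is $\mathcal{F}_\tau$-measurable, applied to $\tau=\phi^{-1}_t$: concretely, for arbitrary $a,s\ge 0$,
\[
\{\phi^{-1}_t\le a\}\cap\{\phi^{-1}_t\le s\}=\{\phi^{-1}_t\le a\wedge s\}=\{t\le \phi_{a\wedge s}\}\in\mathcal{F}_{a\wedge s}\subseteq\mathcal{F}_s,
\]
so $\{\phi^{-1}_t\le a\}\in\mathcal{F}_{\phi^{-1}_t}=\widetilde{\mathcal{F}}_t$, which is exactly adaptedness of $\phi^{-1}$ to $\widetilde{\mathbb{F}}$.

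There is no real obstacle here beyond the notational one of keeping track of which filtration each object lives in; the potentially confusing point is distinguishing the stopping-time property of $\phi^{-1}_t$ with respect to $(\mathcal{F}_s)_s$ (which the paper already establishes) from the adaptedness property with respect to $(\widetilde{\mathcal{F}}_t)_t=(\mathcal{F}_{\phi^{-1}_t})_t$ (which is what Definition~2.1(i) actually demands). Once that is made explicit via the standard identity above, the lemma follows, and I would close the proof with the remark that all four axioms have been verified.
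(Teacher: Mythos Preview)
The paper states this lemma without proof, so there is nothing to compare against; your argument is correct and is exactly the natural verification one would expect. You correctly identify that the only non-trivial point is adaptedness of $\phi^{-1}_t$ to $\widetilde{\mathcal{F}}_t=\mathcal{F}_{\phi^{-1}_t}$, and your reduction to the standard fact that a stopping time is measurable with respect to its own $\sigma$-field (via the identity $\{\phi^{-1}_t\le a\}\cap\{\phi^{-1}_t\le s\}=\{\phi^{-1}_t\le a\wedge s\}\in\mathcal{F}_{a\wedge s}$) is clean and complete.
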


\vskip0.5cm Now we show how the stochastic integral changes in
time change.

\vskip0.5cm
\begin{proposition}[{[1]}]
Let $M\in\mathcal{M}_2^{loc}$ and $\Phi\in\mathcal{L}_2^{loc}(M)$.

\noindent Then $X=I^M(\Phi):=\D\int_0^{.}\Phi(s)dM_s$ is
characterized as the unique $X\in\mathcal{M}_2^{loc}$ such that

\begin{displaymath}
<X,N>(t)=\int_0^{t}\Phi(u)d<M,N>(u)
\end{displaymath}
 for every $N\in\mathcal{M}_2^{loc}$ and all $t\ge0$.
\end{proposition}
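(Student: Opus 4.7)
The plan is to establish existence and uniqueness separately. Uniqueness is the cleaner half and I would dispose of it first. Suppose $X_1, X_2 \in \mathcal{M}_2^{loc}$ both satisfy $\langle X_i, N\rangle(t) = \int_0^t \Phi(u)\,d\langle M, N\rangle(u)$ for every $N \in \mathcal{M}_2^{loc}$ and all $t \ge 0$. Set $Y := X_1 - X_2 \in \mathcal{M}_2^{loc}$; by bilinearity of the bracket, $\langle Y, N\rangle \equiv 0$ for every such $N$. Specialising to $N = Y$ gives $\langle Y \rangle \equiv 0$, which together with $Y_0 = 0$ forces $Y \equiv 0$ almost surely.

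For existence, the strategy is the standard two-step construction. First, for a simple integrand $\Phi = \sum_{i=0}^{n-1}\xi_i\mathbf{1}_{(t_i,t_{i+1}]}$ with each $\xi_i$ bounded and $\mathcal{F}_{t_i}$-measurable, I would define $X(t) := \sum_{i}\xi_i(M_{t\wedge t_{i+1}} - M_{t\wedge t_i})$ directly; the bracket identity then follows from the elementary fact that for $\xi$ bounded and $\mathcal{F}_s$-measurable and $s\le u$ one has $\langle \xi(M^{u} - M^{s}), N\rangle = \xi\bigl(\langle M, N\rangle^{u} - \langle M, N\rangle^{s}\bigr)$, together with additivity of the bracket. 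Second, for a general $\Phi \in \mathcal{L}_2^{loc}(M)$, I would localise by a sequence $\tau_n \uparrow \infty$ such that $M^{\tau_n}$ is a bounded $L^2$-martingale and $\mathbb{E}[\int_0^{\tau_n}\Phi^2\,d\langle M\rangle]<\infty$, approximate $\Phi$ on $[0,\tau_n]$ in $L^2(d\langle M\rangle\otimes d\mathbb{P})$ by simple processes $\Phi^k$, and pass to the limit using the It\^o isometry on the left-hand side and the Kunita--Watanabe inequality on the right-hand side to obtain the bracket identity on $[0,\tau_n]$, before letting $n\to\infty$.

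The main obstacle will be the uniform passage to the limit, because the identity must hold simultaneously for every $N \in \mathcal{M}_2^{loc}$, not just for $N$ fixed in advance. The remedy is to localise $N$ as well: the Kunita--Watanabe inequality yields the pointwise estimate
$\bigl|\int_0^t(\Phi^k - \Phi)\,d\langle M, N^{\sigma_m}\rangle\bigr| \le \bigl(\int_0^t(\Phi^k - \Phi)^2\,d\langle M\rangle\bigr)^{1/2}\langle N^{\sigma_m}\rangle(t)^{1/2}$,
which converges to zero in probability once $\sigma_m$ is chosen to make $\langle N^{\sigma_m}\rangle$ bounded. This forces the identity for the limit $X$ against each $N^{\sigma_m}$, and removing the auxiliary localisation by sending $\sigma_m\to\infty$ delivers the characterisation for arbitrary $N$. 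Continuity of the sample paths of $X$ and its membership in $\mathcal{M}_2^{loc}$ then follow from Doob's maximal inequality applied to the Cauchy sequence $I^M(\Phi^k)$, which closes the argument.
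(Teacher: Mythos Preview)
Your argument is correct and is essentially the standard textbook proof: uniqueness via $\langle Y\rangle=0$, existence first for simple integrands and then by localisation plus $L^2$-approximation, with Kunita--Watanabe controlling the right-hand side in the limit. However, the paper does not supply its own proof of this proposition at all; it is quoted verbatim from Ikeda--Watanabe~[1] and left unproved, serving only as a tool for the subsequent Lemma~2.2. So there is nothing to compare against in the paper itself---what you have written is, up to presentation, the proof one finds in~[1]. One small remark: since the statement is for $M\in\mathcal{M}_2^{loc}$ rather than $\mathcal{M}_2^{c,loc}$, the final sentence should speak of c\`adl\`ag rather than continuous paths, but this does not affect the structure of the argument.
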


\vskip0.5cm

\begin{lemma}
Let $\xi$ be a nonnegative random variable on $(\Omega,
\mathcal{F}, \mathbb{P}, \mathbb{F})$, $\phi(t,\omega)$ be a
$C^1$ time change on $(\Omega, \mathcal{F}, \mathbb{P},
\mathbb{F})$ and $(X_t)_{t\ge0}\in\mathcal{L}_2^{loc}(M)$.

\noindent Then
$(\widetilde{X}_t)_{t\ge0}:=(X_{\phi^{-1}(t)})_{t\ge0}\in\mathcal{L}_2^{loc}(\widetilde{M})$
and

\begin{displaymath}
\int_0^{\xi}X_sdM_s=\int_0^{\phi(\xi)}\widetilde{X}_sd\widetilde{M}_s\quad
a.s.
\end{displaymath}
\end{lemma}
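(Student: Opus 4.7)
The plan is to identify the time-change $\widetilde{Y}_t := Y_{\phi^{-1}(t)}$ of the integral process $Y_t := \int_0^t X_s\,dM_s$ with the stochastic integral $\int_0^t \widetilde{X}_s\,d\widetilde{M}_s$ taken in the filtration $\widetilde{\mathbb{F}}$. Once this identification is in hand, evaluating at $t=\phi(\xi)$ gives $\int_0^{\xi} X_s\,dM_s = Y_\xi = \widetilde{Y}_{\phi(\xi)} = \int_0^{\phi(\xi)} \widetilde{X}_s\,d\widetilde{M}_s$, which is exactly the claimed identity.

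First I would establish $\widetilde{X}\in\mathcal{L}_2^{loc}(\widetilde{M})$. Adaptedness to $\widetilde{\mathbb{F}}$ is built into the definition of $\widetilde{\mathcal{F}}_t$. For the integrability, I would combine the quadratic-variation formula $\langle \widetilde{M}\rangle_t = \langle M\rangle_{\phi^{-1}(t)}$ (already noted in the paper) with the pathwise Lebesgue--Stieltjes substitution $u=\phi^{-1}(s)$, which is legitimate because $\phi$ is $C^1$ and strictly increasing. This rewrites $\int_0^T \widetilde{X}_s^2\,d\langle \widetilde{M}\rangle_s$ as $\int_0^{\phi^{-1}(T)} X_u^2\,d\langle M\rangle_u$, which is a.s.\ finite since $X\in\mathcal{L}_2^{loc}(M)$.

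Next I would invoke Proposition 2.1 in the filtration $\widetilde{\mathbb{F}}$. Doob's optional sampling places $\widetilde{Y}$ in $\widetilde{\mathcal{M}}_2^{c,loc}$, so the claim reduces to verifying
\[
\langle \widetilde{Y},\widetilde{N}\rangle_t = \int_0^t \widetilde{X}_s\,d\langle \widetilde{M},\widetilde{N}\rangle_s \quad \text{for every } \widetilde{N}\in\widetilde{\mathcal{M}}_2^{loc}.
\]
Given such an $\widetilde{N}$, set $N_t := \widetilde{N}_{\phi(t)}$; by Lemma 2.1 each $\phi(t)$ is an $\widetilde{\mathbb{F}}$-stopping time, so optional sampling makes $N$ a local martingale in the original filtration with $\widetilde{N}_t = N_{\phi^{-1}(t)}$. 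Using the time-change invariance of quadratic covariation recorded in the paper and Proposition 2.1 applied in the original filtration,
\[
\langle \widetilde{Y},\widetilde{N}\rangle_t = \langle Y,N\rangle_{\phi^{-1}(t)} = \int_0^{\phi^{-1}(t)} X_u\,d\langle M,N\rangle_u.
\]
A second pathwise substitution $u=\phi^{-1}(s)$, together with $\langle \widetilde{M},\widetilde{N}\rangle_s = \langle M,N\rangle_{\phi^{-1}(s)}$, converts this into $\int_0^t \widetilde{X}_s\,d\langle \widetilde{M},\widetilde{N}\rangle_s$, as required. Plugging $t=\phi(\xi)$ into $\widetilde{Y}_t=\int_0^t \widetilde{X}_s\,d\widetilde{M}_s$ then closes the argument.

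The main obstacle will be justifying that an arbitrary $\widetilde{N}\in\widetilde{\mathcal{M}}_2^{loc}$ can be realized as the time-change of some $N\in\mathcal{M}_2^{loc}$ in the original filtration, and keeping the two pathwise Lebesgue--Stieltjes substitutions clean. The $C^1$ hypothesis on $\phi$ is precisely what makes these substitutions routine; without it one would have to resort to approximation by simple integrands or a monotone-class argument.
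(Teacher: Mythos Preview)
Your proposal is correct and follows essentially the same route as the paper: time-change the integral process $Y=\int_0^{\cdot}X_s\,dM_s$ and then invoke Proposition~2.1 in the filtration $\widetilde{\mathbb{F}}$ by computing $\langle \widetilde{Y},\widetilde{N}\rangle_t=\langle Y,N\rangle_{\phi^{-1}(t)}=\int_0^{\phi^{-1}(t)}X_u\,d\langle M,N\rangle_u=\int_0^t \widetilde{X}_u\,d\langle \widetilde{M},\widetilde{N}\rangle_u$. The only difference is that you make explicit two points the paper leaves implicit, namely the integrability check $\widetilde{X}\in\mathcal{L}_2^{loc}(\widetilde{M})$ and the realization of an arbitrary $\widetilde{N}\in\widetilde{\mathcal{M}}_2^{loc}$ as the time-change of some $N$ via Lemma~2.1.
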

\begin{proof}
As we mentioned above, $M\in\mathcal{M}_2^{loc}$ implies that
$\widetilde{M}\in\mathcal{\widetilde{M}}_2^{loc}$ and
\begin{displaymath}
\widetilde{I}:=I^{\widetilde{M}}(\widetilde{X}_s)=\int_0^{\phi(\xi)}\widetilde{X}_sd\widetilde{M}_s\in\widetilde{M}_2^{loc}
\end{displaymath}
\noindent And

\vskip 0.2cm
\begin{math}
   <\widetilde{I},\widetilde{N}>(t)=<I,N>(\phi^{-1}(t))=\\
    \textrm{\qquad\qquad}=\D\int_0^{\phi^{-1}(t)}X_sd<M,N>(s)=\\
    \textrm{\qquad\qquad}=\D\int_0^t X_{\phi^{-1}(u)}d<M,N>(\phi^{-1}(u))=\D\int_0^t
    \widetilde{X}_ud<\widetilde{M},\widetilde{N}>(u)\\
\end{math}

\vskip 0.1cm
 for all $\widetilde{N}\in\widetilde{M}_2^{loc}$ and $t\ge0$.
\vskip 0.2cm So from Proposition 2.1 we have the result.
\end{proof}

\vskip0.5cm
\begin{corollary} Let $\eta$ be another nonnegative random
variable on $(\Omega, \mathcal{F}, \mathbb{P}, \mathbb{F})$

\noindent Then we have
\begin{displaymath}
\int_\eta^{\xi}X_sdM_s=\int_{\phi(\eta)}^{\phi(\xi)}\widetilde{X}_sd\widetilde{M}_s\quad
a.s.
\end{displaymath}
\end{corollary}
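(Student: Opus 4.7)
The plan is to reduce the corollary to Lemma~2.2 by expressing the stochastic integral over the random interval $[\eta,\xi]$ as the difference of two stochastic integrals starting from the origin. By the standard convention for stochastic integrals with random endpoints, I would first write
\[
\int_\eta^{\xi} X_s\,dM_s = \int_0^{\xi} X_s\,dM_s - \int_0^{\eta} X_s\,dM_s\quad a.s.,
\]
an identity which holds irrespective of whether $\eta\le\xi$ or $\eta>\xi$, since $\int_\eta^\xi$ is defined in precisely this way.

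Next, since both $\eta$ and $\xi$ are nonnegative random variables on $(\Omega,\mathcal{F},\mathbb{P},\mathbb{F})$, they both satisfy the hypothesis of Lemma~2.2, so I would apply that lemma separately to each term, obtaining
\[
\int_0^{\xi} X_s\,dM_s = \int_0^{\phi(\xi)} \widetilde{X}_s\,d\widetilde{M}_s\quad\text{and}\quad \int_0^{\eta} X_s\,dM_s = \int_0^{\phi(\eta)} \widetilde{X}_s\,d\widetilde{M}_s\quad a.s.
\]
Subtracting these two identities and performing the same decomposition on the transformed side, this time in reverse, gives
\[
\int_0^{\phi(\xi)} \widetilde{X}_s\,d\widetilde{M}_s - \int_0^{\phi(\eta)} \widetilde{X}_s\,d\widetilde{M}_s = \int_{\phi(\eta)}^{\phi(\xi)}\widetilde{X}_s\,d\widetilde{M}_s\quad a.s.,
\]
which is the claimed identity.

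There is essentially no obstacle here; the corollary is a direct algebraic consequence of Lemma~2.2, which justifies its status as a corollary rather than a new lemma. The only bookkeeping point worth underlining is that the time-changed integrand $\widetilde{X}_s=X_{\phi^{-1}(s)}$ and the time-changed martingale $\widetilde{M}$ are the same in both decompositions, so the difference of the two integrals from $0$ telescopes cleanly into a single integral between the transformed endpoints $\phi(\eta)$ and $\phi(\xi)$.
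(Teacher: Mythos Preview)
Your proposal is correct and is exactly the intended argument: the paper states the corollary without proof, leaving it as an immediate consequence of Lemma~2.2, and your decomposition into the difference of two integrals from $0$ followed by two applications of that lemma is precisely how one fills in the details.
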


\vskip 0.5cm Now we state our main result which shows the
relationship between the case of constant time horizon and the
case of random time horizon.

\begin{theorem}
Let $(Y_t, Z_t)$ be a solution of the following BSDE defined on
$(\Omega, \mathcal{F}, \mathbb{P}, (\widetilde{\mathcal{F}}_t))$

\begin{displaymath}
 \quad Y_t=\xi-\int_t^\tau Z_sdW_s+\int_t^\tau f(s, Y_s, Z_s)ds,
 \quad 0\le t\le \tau\qquad(2.1)
\end{displaymath}

where $\tau$ is $(\mathcal{F}_t)_{t\ge0}-$stopping time such that
 $0\le\tau<\infty\quad a.s.$ and $\xi\in\mathcal{F}_\tau$.

Then there exist a $C^1$ time change $\phi(t,\omega)$ on $(\Omega,
\mathcal{F}, \mathbb{P}, (\mathcal{F}_t))$ such that

\begin{displaymath}
(y_t, z_t):=(Y_{\phi^{-1}(t)},
Z_{\phi^{-1}(t)}\phi'(t)^{-\frac{1}{2}})
\end{displaymath}
is a solution of the following BSDE defined on $(\Omega,
\mathcal{F}, \mathbb{P}, (\widetilde{\mathcal{F}}_t))$.

\begin{displaymath}
 \quad y_t=\xi-\int_t^1 z_sd\widetilde{W}_s+\int_t^1 f(\phi^{-1}(s), y_s, z_s\phi'(s)^{\frac{1}{2}})\phi'(s)^{-1}ds,
 \quad 0\le t\le 1\qquad(2.2)
\end{displaymath}

\noindent where $(\widetilde{W}_t)_{t\ge0}$ is a Wiener process on
$(\Omega, \mathcal{F}, \mathbb{P}, (\widetilde{\mathcal{F}}_t))$.
\end{theorem}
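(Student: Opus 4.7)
The plan is to use the explicit time change from Example~2.1, $\phi(t,\omega) = t/\tau(\omega)$, which is $C^1$ on $\{\tau>0\}$ (the event $\{\tau=0\}$ is trivial since (2.1) then forces $Y_0 = \xi$). With this choice, $\phi^{-1}(t) = t\tau$, $\phi'(t) \equiv 1/\tau$ is constant in $t$, and $\phi(\tau) = 1$ --- the last identity is what turns the random horizon $\tau$ into the constant horizon $1$ in (2.2).

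I would then identify the Brownian motion on $(\Omega,\mathcal{F},\mathbb{P},\widetilde{\mathcal{F}})$. By Lemma~2.2 applied to $M=W$ and the quadratic-variation identity recalled in the excerpt, $\widetilde{W}^{(0)}_t := W_{\phi^{-1}(t)}$ is a continuous $\widetilde{\mathcal{F}}_t$-local martingale with $\langle\widetilde{W}^{(0)}\rangle_t = \phi^{-1}(t) = t\tau$. The rescaled process
\[
\widetilde{W}_t := \int_0^t \phi'(u)^{1/2}\,d\widetilde{W}^{(0)}_u
\]
satisfies $\langle\widetilde{W}\rangle_t = \int_0^t \phi'(u)\,\tau\,du = t$, so by L\'{e}vy's characterization it is an $\widetilde{\mathcal{F}}_t$-Brownian motion; this is the $\widetilde{W}$ in (2.2), with differential relation $d\widetilde{W}^{(0)}_u = \phi'(u)^{-1/2}\,d\widetilde{W}_u$. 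Corollary~2.1 with $\eta=t$, $\xi=\tau$, together with the definition $z_u = Z_{\phi^{-1}(u)}\phi'(u)^{-1/2}$, then transforms the stochastic integral,
\[
\int_t^\tau Z_s\,dW_s = \int_{\phi(t)}^{1} Z_{\phi^{-1}(u)}\,d\widetilde{W}^{(0)}_u = \int_{\phi(t)}^{1} Z_{\phi^{-1}(u)}\phi'(u)^{-1/2}\,d\widetilde{W}_u = \int_{\phi(t)}^{1} z_u\,d\widetilde{W}_u,
\]
and the pathwise substitution $u=\phi(s)$, $ds=\phi'(u)^{-1}du$, together with $Z_{\phi^{-1}(u)} = z_u\phi'(u)^{1/2}$, transforms the Lebesgue integral,
\[
\int_t^\tau f(s,Y_s,Z_s)\,ds = \int_{\phi(t)}^{1} f\bigl(\phi^{-1}(u),\,y_u,\,z_u\phi'(u)^{1/2}\bigr)\phi'(u)^{-1}\,du.
\]
Evaluating (2.1) at $\phi^{-1}(t)$ in place of $t$ and using $Y_{\phi^{-1}(t)} = y_t$ with $\phi(\tau)=1$ then delivers (2.2).

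The main obstacle I expect is the Brownian-motion rescaling --- verifying both the relation $d\widetilde{W}^{(0)}_u = \phi'(u)^{-1/2}\,d\widetilde{W}_u$ and that $\widetilde{W}$ is $\widetilde{\mathcal{F}}$-adapted with quadratic variation $t$, so that L\'{e}vy's theorem genuinely applies. With the concrete choice $\phi(t) = t/\tau$, $\phi'$ is $t$-constant, which is also what lets the expression $\phi'(u)^{-1}$ in (2.2) be unambiguous: for a more general $C^1$ time change one would have to distinguish $\phi'(u)$ from $\phi'(\phi^{-1}(u))$ in the drift. The remaining items --- strict positivity and adaptedness of $\phi$, matching the initial/terminal values, and justifying the pathwise substitution in the Lebesgue integral --- are routine given Lemma~2.2 and Corollary~2.1.
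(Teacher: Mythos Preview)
Your proposal is correct and follows essentially the same route as the paper: choose the time change $\phi(t)=t/\tau$ from Example~2.1, apply Lemma~2.2/Corollary~2.1 to the stochastic integral and an ordinary change of variable to the drift, then build $\widetilde W$ by rescaling $W_{\phi^{-1}(\cdot)}$ with the factor $(d\phi^{-1}/ds)^{-1/2}=\phi'(u)^{1/2}$ and invoke L\'evy's characterization via the quadratic-variation computation. Your remark about the ambiguity between $\phi'(u)$ and $\phi'(\phi^{-1}(u))$ for non-affine time changes is well taken --- the paper's proof in fact uses $h(s)=\bigl(d\phi^{-1}(s)/ds\bigr)^{-1/2}=\phi'(\phi^{-1}(s))^{1/2}$, which agrees with $\phi'(s)^{1/2}$ only because $\phi'$ is constant here.
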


\begin{proof}
Let's take a $C^1$ time change on $(\Omega, \mathcal{F},
\mathbb{P}, (\mathcal{F}_t))$ such that $\phi(\tau,\omega)=1$.(See
Example 2.1)

Now we'll prove the result for this $\phi$.

From the definition
\begin{displaymath}
y_t=Y_{\phi^{-1}(t)}=\xi-\int_{\phi^{-1}(t)}^\tau
Z_sdW_s+\int_{\phi^{-1}(t)}^\tau f(s,Y_s,Z_s)ds
\end{displaymath}

And from Lemma 2.2
\begin{displaymath}
\int_{\phi^{-1}(t)}^\tau Z_sdW_s=
\int_{\phi(\phi^{-1}(t))}^{\phi(\tau)}
Z_{\phi^{-1}(s)}dW_{\phi^{-1}(s)}=\int_t^1Z_{\phi^{-1}(s)}dW_{\phi^{-1}(s)}
\end{displaymath}

and it's clear that

\begin{displaymath}
\int_{\phi^{-1}(t)}^\tau
f(s,Y_s,Z_s)ds=\int_t^1f(\phi^{-1}(s),Y_{\phi^{-1}(s)},Z_{\phi^{-1}(s)})d\phi^{-1}(s)
\end{displaymath}

So we have
\begin{displaymath}
y_t=\xi-\int_t^1Z_{\phi^{-1}(s)}dW_{\phi^{-1}(s)}+\int_t^1f(\phi^{-1}(s),Y_{\phi^{-1}(s)},Z_{\phi^{-1}(s)})d\phi^{-1}(s)
\end{displaymath}

Now let
$h(s,\omega):=\biggl(\dfrac{d\phi^{-1}(s)}{ds}^{-\frac{1}{2}}\biggr)$
and $\widetilde{W}_t:=\D\int_0^t h(s,\omega)dW_{\phi^{-1}(s)}$,
then $\widetilde{W}_t$ is a $(\mathcal{\widetilde{F}}_t)-$
Brownian motion.

In fact, if we let $M_t:=W_{\phi^{-1}(t)}$, then as we have seen
before, $M_t\in\widetilde{\mathcal{M}}_2^{c,loc}$ and so
$\widetilde{W}_t\in\widetilde{\mathcal{M}}_2^{c,loc}$.

\noindent And since $<M>_t=\phi^{-1}(t)$,

\begin{displaymath}
<\widetilde{W}_t>=\int_0^t h^2(s,\omega)d<M>_s=\int_0^t
h^2(s,\omega) h^{-2}(s,\omega) ds= t
\end{displaymath}

\noindent So we have

\begin{displaymath}
y_t=\xi-\int_t^1Z_{\phi^{-1}(s)}h^{-1}(s,\omega)d\widetilde{W}_s+\int_t^1f(\phi^{-1}(s),Y_{\phi^{-1}(s)},Z_{\phi^{-1}(s)})h(s,\omega)^{-2}ds
\end{displaymath}

\noindent Since $z_t=Z_{\phi^{-1}(t)}h^{-1}(t,\omega)$, we have

\begin{displaymath}
y_t=\xi-\int_t^1 z_sd\widetilde{W}_s+\int_t^1 f(\phi^{-1}(s), y_s,
z_s\phi'(s)^{\frac{1}{2}})\phi'(s)^{-1}ds
\end{displaymath}

And $y_t,z_t$ are $(\widetilde{\mathcal{F}}_t)_{t\ge0}-$ adapted
because $Y_t, Z_t$ are $(\mathcal{F}_t)_{t\ge0}$ adapted and
$h(t,\omega)$ is $(\widetilde{\mathcal{F}}_t)_{t\ge0}-$ adapted.

So $(y_t,z_t)$ is a solution to the BSDE with constant time
horizon defined on $(\Omega, \mathcal{F}, \mathbb{P},
(\widetilde{\mathcal{F}}_t))$.
\end{proof}

\begin{remark}
Note that the transformation of Brownian motion is given as
following.
\begin{displaymath}
\begin{cases}
d\widetilde{W}_s=\phi'(s)^{\frac{1}{2}}dW_{\phi^{-1}(s)} \\
dW_s=\phi'(\phi(t))^{-\frac{1}{2}}d\widetilde{W}_{\phi^{-1}(s)} \\
\end{cases}
\end{displaymath}
\end{remark}

\begin{remark}
From now on,we'll use the notation $t\in[0,\tau]$ to represent the
set $\{(\omega,t)\in\Omega\times[0,\infty)\}$.
\end{remark}

\begin{remark}
From Lemma 2.1, we can consider the inverse mapping.

\noindent That is, if $(y_t,z_t)$ is a solution of (2.2),then
$(Y_t,Z_t)=(y_{\phi(t)}, z_{\phi(t)}\phi'(\phi(t))^\frac{1}{2})$
is a solution of (2.1). So in some sense, we can say that the two
equations (2.1) and (2.2) are equivalent and we can get results
for one equation from the results for the other one.
\end{remark}

\begin{example}
Recently there has been much work on quadratic BSDEs in
connections with partial differential equations. A typical and
very simple quadratic BSDE is
\begin{displaymath}
Y_t=\xi-\int_t^TZ_sdW_s+\int_t^T\alpha Z_s^2ds
\end{displaymath}
If we replace the time horizon with stopping time $\tau$ , we have
\begin{displaymath}
Y_t=\xi-\int_t^\tau Z_sdW_s+\int_t^\tau \alpha Z_s^2ds
\end{displaymath}

Here if we apply the transformation in the theorem we have
\begin{displaymath}
y_t=\xi-\int_t^1 z_sd\widetilde{W}_s+\int_t^1 \alpha z_s^2ds
\end{displaymath}
and it's just the same as the original one with constant time
horizon. From this example we can guess that there isn't any
difference between the two equations.
\end{example}

\section{Some Results}
In this section, we generalize some results known for the case of
constant time horizon to the case of random time horizon using our
method.

\subsection{Formula for the solution of linear BSDE}
The linear BSDEs are very important in mathematical finance and
there exists an explicit formula for the solution in the case of
constant time horizon. We generalize this formula to the case of
random time horizon.

\vskip 0.5cm
In this section we assume that the random time horizon
$\tau$ is bounded by real number $T>0$ and define the following
spaces:

\noindent
\begin{math}
\textrm{\qquad}\mathcal{P}_n : \textrm{the set of
}\mathcal{F}_t-\textrm{measurable, }\mathbb{R}^n-\textrm{valued
processes on }\Omega\times[0,T] \\
\textrm{\qquad}\mathcal
{L}_n^2(\mathcal{F}_t)=\{\eta:\mathcal{F}_t\textrm{-measurable
random } \mathbb R^n \textrm{-valued variable such that }
E[\lvert\eta\rvert^2]<\infty \} \\
\textrm{\qquad} \mathcal{S}_n^2=\{\varphi\in\mathcal{P}_n \textrm{
with continuous paths such that } E[\sup_{t\le
T}\lvert\varphi_t\rvert^2]<\infty \} \\
\textrm{\qquad} \mathcal{H}_n^2(0,T)=\{Z\in\mathcal{P}_n\;\lvert\;
E\D[\int_0^T|Z_s|^2ds]<\infty\} \\
\textrm{\qquad} \mathcal{H}_n^1(0,T)=\{Z\in\mathcal{P}_n\;\lvert\;
E\D[(\int_0^T|Z_s|^2ds)^{\frac{1}{2}}]<\infty\}
\end{math}
\theoremstyle{break}
\begin{theorem}[{El Karoui, N., Peng S., Quenez M.C.[10]}]$\\$
Let $(\beta, \mu)$ be a bounded $(\mathbb{R},\mathbb{R}^d)-$valued
progressively measurable process, $\varphi$ be an element of
$\mathcal{H}_1^2(0,T)$ and $\xi\in \mathcal{L}_1^2(0,T)$.

\noindent We consider the following linear BSDE:
\begin{displaymath}
\begin{cases}
-dY_t=(\varphi_t+Y_t\beta_t+Z_t\mu_t)dt-Z_tdW_t \\
Y_T=\xi
\end{cases}
\end{displaymath}
This equation has a unique solution
$(Y,Z)\in\mathcal{S}_1^2(0,T)\times\mathcal{H}_d^2(0,T)$ and $Y$
is given explicitly by
\begin{displaymath}
Y_t=E\biggl[\xi\Gamma_{t,T}+\int_t^T\Gamma_{t,s}\varphi_sds\;|\;\mathcal{F}_t\biggr]
\end{displaymath}
\noindent where $(\Gamma_{t,s})_{s\ge t}$ is the adjoint process
defined by the forward linear SDE
\begin{displaymath}
\begin{cases}
d\Gamma_{t,s}=\Gamma_{t,s}(\beta_sds+\mu_sdW_s)\\
\Gamma_{t,t}=1
\end{cases}
\end{displaymath}
\end{theorem}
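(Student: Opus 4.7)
The plan is to treat the two assertions in the theorem separately: first the existence and uniqueness of the solution $(Y,Z)$ in $\mathcal{S}_1^2(0,T)\times\mathcal{H}_d^2(0,T)$, then the explicit representation. For existence and uniqueness I would invoke the classical Pardoux--Peng result: since $\beta$ and $\mu$ are bounded progressively measurable processes, the driver $f(s,y,z)=\varphi_s+\beta_s y+\mu_s z$ is affine in $(y,z)$ with a uniformly bounded Lipschitz constant, and together with $\xi\in\mathcal{L}_1^2(\mathcal{F}_T)$ and $\varphi\in\mathcal{H}_1^2(0,T)$ this places us exactly in the standard well-posedness framework. No new idea is needed here; the contraction argument on the solution space $\mathcal{S}_1^2\times\mathcal{H}_d^2$ applies verbatim.

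For the formula, my approach is to compute $d(\Gamma_{t,s}Y_s)$ for fixed $t$ by Ito's product rule. Substituting
\[
d\Gamma_{t,s}=\Gamma_{t,s}(\beta_s\,ds+\mu_s\,dW_s),\qquad dY_s=-(\varphi_s+\beta_sY_s+\mu_sZ_s)\,ds+Z_s\,dW_s,
\]
and adding the cross-variation $d\langle\Gamma_{t,\cdot},Y\rangle_s=\Gamma_{t,s}\mu_s Z_s\,ds$, the $\beta_s Y_s\Gamma_{t,s}$ drift terms cancel, and the covariation exactly cancels the $\mu_s Z_s\Gamma_{t,s}$ term in $dY_s$. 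What remains is
\[
d(\Gamma_{t,s}Y_s)=-\Gamma_{t,s}\varphi_s\,ds+\Gamma_{t,s}(\mu_sY_s+Z_s)\,dW_s.
\]
Integrating from $t$ to $T$ and using $\Gamma_{t,t}=1$ and $Y_T=\xi$ gives
\[
Y_t=\Gamma_{t,T}\xi+\int_t^T\Gamma_{t,s}\varphi_s\,ds-\int_t^T\Gamma_{t,s}(\mu_sY_s+Z_s)\,dW_s,
\]
and conditioning on $\mathcal{F}_t$ produces the claimed representation \emph{provided} the $dW$-integral has zero conditional expectation.

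The main obstacle is therefore the last step, i.e.\ upgrading the stochastic integral from a local martingale to a genuine martingale. I would argue as follows: by Doleans-Dade, $\Gamma_{t,s}$ can be written as an exponential of $\int_t^s\mu_r\,dW_r$ plus bounded variation terms driven by the bounded processes $\beta$ and $|\mu|^2$. Bounded $\mu$ then yields $\sup_{s\in[t,T]}E[\Gamma_{t,s}^p]<\infty$ for every $p\ge1$. Combined with $Y\in\mathcal{S}_1^2$, $Z\in\mathcal{H}_d^2$, and the boundedness of $\mu$, Holder's inequality gives
\[
E\!\left[\int_t^T\Gamma_{t,s}^2\bigl(|\mu_s|^2Y_s^2+|Z_s|^2\bigr)\,ds\right]<\infty,
\]
so the stochastic integral is a square-integrable martingale and its conditional expectation at time $t$ vanishes. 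This integrability step is the only genuinely analytic point; everything else is an application of Ito's formula and a citation of the Pardoux--Peng existence theory.
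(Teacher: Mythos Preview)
The paper does not actually prove this statement: Theorem~3.1 is simply quoted from El Karoui, Peng and Quenez~[10] as a known result, and is then used as input for the proof of Theorem~3.2 (the stopping-time version) via the time-change map of Theorem~2.1. So there is no ``paper's own proof'' to compare against; you have supplied a proof where the authors only cite one. Your argument is in fact the standard one from~[10]: Pardoux--Peng well-posedness for the Lipschitz driver $f(s,y,z)=\varphi_s+\beta_s y+\mu_s z$, It\^o's product rule applied to $\Gamma_{t,s}Y_s$, and an integrability check to make the stochastic integral vanish under conditional expectation.

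One small technical point on that last step. With only $Y\in\mathcal{S}_1^2$ and $Z\in\mathcal{H}_d^2$, the displayed bound
\[
E\!\left[\int_t^T\Gamma_{t,s}^2\bigl(|\mu_s|^2Y_s^2+|Z_s|^2\bigr)\,ds\right]<\infty
\]
does not follow directly from H\"older together with $\sup_s E[\Gamma_{t,s}^p]<\infty$: you would implicitly need fourth moments of $\sup_s|Y_s|$ or of $\int_t^T|Z_s|^2\,ds$, which are not given. The easy repair is either to localize and pass to the limit using uniform integrability of $(\Gamma_{t,\tau_n}Y_{\tau_n})_n$, or to aim instead for the weaker (and sufficient) estimate
\[
E\!\left[\Bigl(\int_t^T\Gamma_{t,s}^2|\mu_sY_s+Z_s|^2\,ds\Bigr)^{1/2}\right]
\le \bigl(E[\sup_s\Gamma_{t,s}^2]\bigr)^{1/2}\bigl(E[\textstyle\int_t^T|\mu_sY_s+Z_s|^2\,ds]\bigr)^{1/2}<\infty,
\]
which uses only $E[\sup_s\Gamma_{t,s}^2]<\infty$ (true since $\beta,\mu$ are bounded) and the assumed $\mathcal{S}_1^2\times\mathcal{H}_d^2$ integrability. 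Either route makes the stochastic integral a true martingale and your representation formula follows.
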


\vskip 0.5cm
\begin{theorem}
Let $(\beta, \mu)$ be a bounded $(\mathbb{R},\mathbb{R}^d)-$valued
progressively measurable process, $\varphi$ be an element of
$\mathcal{H}_1^2(0,T)$ , $\tau$ be a $(\mathcal{F}_t)-$ stopping
time bounded by $T>0$ and $\xi\in \mathcal{L}_1^2(0,T)$.

\noindent We consider the following linear BSDE:
\begin{displaymath}
\begin{cases}
-dY_t=(\varphi_t+Y_t\beta_t+Z_t\mu_t)dt-Z_tdW_t \\
Y_\tau=\xi
\end{cases}
\end{displaymath}
This equation has a unique solution
$(Y,Z)\in\mathcal{S}_1^2(0,T)\times\mathcal{H}_d^2(0,T)$ and $Y$
is given explicitly by
\begin{displaymath}
Y_t=E\biggl[\xi\Gamma_{t,\tau}+\int_t^\tau\Gamma_{t,s}\varphi_sds\;|\;\mathcal{F}_t\biggr]
\end{displaymath}
\noindent where $(\Gamma_{t,s})_{s\ge t}$ is the adjoint process
defined by the forward linear SDE
\begin{displaymath}
\begin{cases}
d\Gamma_{t,s}=\Gamma_{t,s}(\beta_sds+\mu_sdW_s)\\
\Gamma_{t,t}=1
\end{cases}
\end{displaymath}
\end{theorem}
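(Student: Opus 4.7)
The plan is to reduce the random-horizon linear BSDE to the fixed-horizon linear BSDE of Theorem 3.1 via the time change of Theorem 2.1, apply the classical explicit formula, and then transform back using Remark 2.3.

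First, I would pick the time change $\phi(t,\omega)=t/\tau$ from Example 2.1 (legitimate since $\tau>0$ a.s.\ and $\tau\le T$), so that $\phi(\tau)=1$, $\phi^{-1}(s)=s\tau$, and $\phi'(s)=1/\tau$ is uniformly bounded and bounded away from zero. Applying Theorem 2.1 with $f(s,y,z)=\varphi_s+y\beta_s+z\mu_s$, the BSDE on $[0,\tau]$ is mapped to a linear BSDE on $[0,1]$ for $(y_t,z_t)=(Y_{\phi^{-1}(t)},Z_{\phi^{-1}(t)}\phi'(t)^{-1/2})$ in the filtration $(\widetilde{\mathcal{F}}_t)$, driven by $\widetilde{W}$, with coefficients
\begin{equation*}
\widetilde{\beta}_s=\beta_{\phi^{-1}(s)}\phi'(s)^{-1},\qquad \widetilde{\mu}_s=\mu_{\phi^{-1}(s)}\phi'(s)^{-1/2},\qquad \widetilde{\varphi}_s=\varphi_{\phi^{-1}(s)}\phi'(s)^{-1}.
\end{equation*}

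Next I would check that the hypotheses of Theorem 3.1 hold for the transformed problem. Since $\beta,\mu$ are bounded and $\phi'(s)^{-1}=\tau\le T$, the coefficients $\widetilde{\beta},\widetilde{\mu}$ are bounded $(\widetilde{\mathcal{F}}_t)$-progressively measurable. For $\widetilde{\varphi}\in\mathcal{H}^2_1(0,1)$, a change of variables $u=s\tau$ yields $E\int_0^1 \widetilde{\varphi}_s^2\,ds = E\int_0^\tau \varphi_u^2\,\tau\,du\le T\,E\int_0^T\varphi_u^2\,du<\infty$. Theorem 3.1 then gives a unique solution $(y,z)\in \widetilde{\mathcal{S}}_1^2(0,1)\times\widetilde{\mathcal{H}}_d^2(0,1)$ satisfying
\begin{equation*}
y_t=E\!\left[\xi\,\widetilde{\Gamma}_{t,1}+\int_t^1 \widetilde{\Gamma}_{t,s}\,\widetilde{\varphi}_s\,ds\;\Big|\;\widetilde{\mathcal{F}}_t\right],
\end{equation*}
where $\widetilde{\Gamma}_{t,s}$ is defined by the forward SDE with coefficients $\widetilde{\beta},\widetilde{\mu}$ driven by $\widetilde{W}$, and $\widetilde{\Gamma}_{t,t}=1$. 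Pulling back via Remark 2.3 gives $(Y,Z)=(y_{\phi(t)},z_{\phi(t)}\phi'(\phi(t))^{1/2})$ as a solution of the original BSDE, and the $\mathcal{S}^2_1\times\mathcal{H}^2_d$ norms transfer back by the same change of variables, yielding existence, uniqueness, and the required integrability.

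Finally, to recover the stated formula, I would apply the reverse time change to the adjoint SDE: by Lemma 2.2 and Corollary 2.1, the process $s\mapsto\widetilde{\Gamma}_{\phi(t),\phi(s)}$ solves the original forward SDE with coefficients $\beta,\mu$ and initial condition $1$ at $s=t$, so $\widetilde{\Gamma}_{\phi(t),\phi(s)}=\Gamma_{t,s}$; in particular $\widetilde{\Gamma}_{\phi(t),1}=\Gamma_{t,\tau}$. Substituting $s=\phi(u)$ in the integral,
\begin{equation*}
\int_{\phi(t)}^1 \widetilde{\Gamma}_{\phi(t),s}\,\widetilde{\varphi}_s\,ds=\int_t^\tau \Gamma_{t,u}\,\varphi_u\,du,
\end{equation*}
since the Jacobian $\phi'(u)$ cancels the $\phi'(s)^{-1}$ factor in $\widetilde{\varphi}$. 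Using $\widetilde{\mathcal{F}}_{\phi(t)}=\mathcal{F}_t$ then gives the announced formula for $Y_t$. The main obstacle I anticipate is the bookkeeping in this last step — verifying that the adjoint process, the Jacobian weights in $\widetilde{\varphi}$, the limits of integration, and the conditional expectation all transform consistently so that the final formula sits in the original filtration — but once the time-change identity $\widetilde{\Gamma}_{\phi(t),\phi(s)}=\Gamma_{t,s}$ is established, the rest is a routine substitution.
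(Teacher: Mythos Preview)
Your proposal is correct and follows exactly the same route as the paper: apply the time change $\phi(t)=t/\tau$ via Theorem~2.1, invoke the fixed-horizon formula of Theorem~3.1 for the transformed linear BSDE, and then pull back using the inverse mapping of Remark~3. The paper's own proof is considerably terser---it writes down the transformed equation, quotes Theorem~3.1, and asserts the pull-back---whereas you additionally verify the boundedness of $\widetilde\beta,\widetilde\mu$, the $\mathcal{H}^2$-integrability of $\widetilde\varphi$, and the identity $\widetilde\Gamma_{\phi(t),\phi(s)}=\Gamma_{t,s}$ with the accompanying change of variables; these are exactly the details the paper leaves implicit.
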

\begin{proof}
If we set $\phi(t,\omega):=\frac{t}{\tau}$ and apply the
transformation in Theorem 2.1, then we have
\begin{displaymath}
y_t=\xi+\int_t^1[\varphi_{s\tau}+\beta_{s\tau}y_s+\mu_{s\tau}z_s\frac{1}{\sqrt{\tau}}]\tau
ds-\int_t^1z_sd\widetilde{W}_s
\end{displaymath}
Then from the above theorem we get the explicit solution
\begin{displaymath}
y_t=E\biggl[\xi\widetilde{\Gamma}_{t,\tau}+\int_t^1\widetilde{\Gamma}_{t,s}\varphi_sds\;|\;\mathcal{\widetilde{F}}_t\biggr]
\end{displaymath}
And from Remark 2.2, the solution of the original equation is
given by
\begin{displaymath}
Y_t=E\biggl[\xi\Gamma_{t,\tau}+\int_t^\tau\Gamma_{t,s}\varphi_sds\;|\;\mathcal{F}_t\biggr]
\end{displaymath}
\noindent where $(\Gamma_{t,s})_{s\ge t}$ is the adjoint process
defined by the forward linear SDE
\begin{displaymath}
\begin{cases}
d\Gamma_{t,s}=\Gamma_{t,s}(\beta_sds+\mu_sdW_s)\\
\Gamma_{t,t}=1
\end{cases}
\end{displaymath}

\end{proof}

\subsection{Measure Solution}
Measure solutions of BSDE were introduced by S. Ankirchner, et
al.[16]and they prove the existence of measure solution for the
case of constant time horizon under some constraints. Using our
method, we can generalize the result to the case of random time
horizon. The generalization is not so difficult and we do not note
here. We just mention the result for the case of stopping time
horizon.

\vskip 0.5cm

Let $\tau$ be a $(\mathcal{F}_t)-$stopping time bounded by $T>0$,
$\xi$ be a bounded $\mathcal{F}_t-$-measurable variable,
$f:\Omega\times[0,T]\times\mathbb{R}\rightarrow\mathbb{R}$ be a
measurable function such that $f(\cdot, \cdot,z)$ is predictable
for all $z\in\mathbb{R}$ and satisfies following assumption.

{\bf Assumption(H):}

\qquad (i) $\forall z\in\mathbb{R},\;f(s,z)=f(\cdot,s,z)$ is
adapted.

\qquad (ii) $\forall
s\in[0,T],\;g(s,z)=\dfrac{f(s,z)}{z},\;\;z\in\mathbb{R}$ is
continuous in $z$

\qquad (iii) $\forall s\in[0,T],\forall z\in\mathbb{R},
|f(s,z)|\le c(1+z^2)$

\qquad (iv) there exists $\varepsilon>0$ and a predictable process
$(\psi_s)_{s\ge0}$ such that $\int_0^\cdot\psi_sdW_s$ is a
{\indent\hskip 1.5cm} BMO-martingale and for every
$|z|\le\varepsilon,\;|g(s,z)|\le\psi_s$.

Let $\xi$ be an $\mathcal{F}_\tau-$measurable random variable and
consider a BSDE

\begin{displaymath}
Y_t=\xi-\int_t^\tau Z_sdW_s+\int_t^\tau f(s,Z_s)ds\quad
t\in[0,\tau]
\end{displaymath}

\begin{theorem}
There exists a probability measure $\mathbb Q$ equivalent to
$\mathbb P$ and an adapted process $Z$ such that
$E(\bigl[\D\int_0^\tau Z_s^2ds\bigr]^{1}{2})<\infty$ such that,
setting
\begin{displaymath}
R=exp(\int_0^\tau g(s,Z_s)dW_s-\frac{1}{2}\int_0^\tau g(s,Z_s)^2ds
\\
W^\mathbb{Q}=W-\int_0^\cdot g(s,Z_s)ds
\end{displaymath}
\noindent we have $\mathbb{Q}=R\cdot\mathbb{P}$ and such that the
pair $(Y,Z)$ defined by
\begin{displaymath}
Y=E^\mathbb{Q}(\xi\;|\;\mathcal{F}_\cdot)=\int_0^\cdot
Z_sdW_s^\mathbb{Q}
\end{displaymath}
solves the BSDE
\begin{displaymath}
Y_t=\xi-\int_t^\tau Z_sdW_s+\int_t^\tau f(s,Z_s)ds\quad
t\in[0,\tau]
\end{displaymath}
\end{theorem}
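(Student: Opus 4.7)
The plan is to follow exactly the template set by the proof of Theorem 3.2: use the $C^1$ time change $\phi(t,\omega) = t/\tau$ to pull the stopping-time BSDE on $[0,\tau]$ back to a constant-horizon BSDE on $[0,1]$, invoke the Ankirchner et al.\ existence theorem in its known form there, and then push the resulting measure solution forward through Remark 2.3 to obtain the solution on $(\Omega, \mathcal F, \mathbb P, \mathbb F)$.

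First, I apply Theorem 2.1 with $\phi(t,\omega) = t/\tau$. Since the driver of the original BSDE has the form $f(s, Z_s)$ (no $Y$-dependence), the transformed BSDE reads
\begin{displaymath}
y_t = \xi - \int_t^1 z_s\, d\widetilde W_s + \int_t^1 \widetilde f(s, z_s)\, ds,\qquad 0\le t\le 1,
\end{displaymath}
with $\widetilde f(s, z) := \tau\, f(s\tau,\, z/\sqrt{\tau})$. The next step is to verify that $\widetilde f$ satisfies Assumption (H) with respect to the enlarged filtration $\widetilde{\mathcal F}_t = \mathcal F_{\phi^{-1}(t)}$. Adaptedness comes from the general principle in the preamble of Section 2; the continuity in $z$ of $\widetilde g(s,z) = \widetilde f(s,z)/z = \sqrt\tau\, g(s\tau, z/\sqrt\tau)$ is inherited; and the quadratic bound is preserved since $|\widetilde f(s,z)| \le c\tau + c z^2 \le c(T + z^2)$ because $\tau$ is bounded by $T$.

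The only non-routine verification is the fourth clause of (H): that the candidate BMO control $\widetilde\psi_s := \sqrt\tau\, \psi_{s\tau}$ makes $\int_0^\cdot \widetilde\psi_s\, d\widetilde W_s$ a BMO-martingale with respect to $\widetilde{\mathbb F}$. By Lemma 2.2 and Corollary 2.1 this stochastic integral is precisely $T^\phi\bigl(\int_0^\cdot \psi_s\, dW_s\bigr)$, i.e.\ the original BMO martingale time-changed by $\phi$. Since $\phi^{-1}(t)$ is a stopping time of $\mathbb F$ for each $t$, Doob's optional sampling together with the conditional-expectation characterization of the BMO norm yields $\|\int \widetilde\psi\, d\widetilde W\|_{\mathrm{BMO}(\widetilde{\mathbb F})} \le \|\int \psi\, dW\|_{\mathrm{BMO}(\mathbb F)}$. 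This is the step I expect to take the most care, because one has to match the family of stopping times of $\widetilde{\mathbb F}$ with stopping times of $\mathbb F$ via $\phi^{-1}$; everything else is algebraic bookkeeping.

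Once (H) is confirmed in the transformed setting, the theorem of Ankirchner et al.\ supplies a probability $\widetilde{\mathbb Q} = \widetilde R\cdot \mathbb P$ and a process $\widetilde z \in \mathcal H^{2}(0,1)$ such that, with $\widetilde W^{\widetilde{\mathbb Q}} = \widetilde W - \int_0^\cdot \widetilde g(s,\widetilde z_s)\, ds$, the pair
\begin{displaymath}
\widetilde y_t = E^{\widetilde{\mathbb Q}}\bigl(\xi \,\big|\, \widetilde{\mathcal F}_t\bigr),\qquad \widetilde y_t = \widetilde y_0 + \int_0^t \widetilde z_s\, d\widetilde W_s^{\widetilde{\mathbb Q}}
\end{displaymath}
solves the constant-horizon BSDE. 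It then remains to push this solution back: set $Z_t := \widetilde z_{\phi(t)}\,\phi'(\phi(t))^{1/2}$ and $Y_t := \widetilde y_{\phi(t)}$ per Remark 2.3, and verify that the Girsanov density $\widetilde R$ pulls back to $R = \exp\bigl(\int_0^\tau g(s,Z_s)\, dW_s - \tfrac12\int_0^\tau g(s,Z_s)^2\, ds\bigr)$ by invoking Lemma 2.2 once more on the integrals defining $\widetilde R$; equivalence of $\mathbb Q = R\cdot \mathbb P$ with $\mathbb P$ and the conditional-expectation representation $Y_t = E^{\mathbb Q}(\xi\,|\,\mathcal F_t)$ follow because $\phi$ and $\phi^{-1}$ are bijective relations between the two filtrations. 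The moment bound $E\bigl[(\int_0^\tau Z_s^2\, ds)^{1/2}\bigr]<\infty$ transfers directly from the analogous bound on $\widetilde z$ via Corollary 2.1.
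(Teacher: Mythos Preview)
The paper does not actually prove this theorem: the authors write ``The generalization is not so difficult and we do not note here. We just mention the result for the case of stopping time horizon.'' Your proposal is therefore not a comparison against an existing proof but a fleshing-out of the omitted argument, and it follows precisely the template the paper intends---time-change by $\phi(t)=t/\tau$, apply the constant-horizon result of Ankirchner et al.\ on $[0,1]$, and push back via Remark~2.3, exactly as was done for Theorem~3.2. In that sense your outline is what the paper would have written had it included a proof.

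One point to watch, which the paper's terse treatment glosses over, is clause~(iv) of Assumption~(H) in the transformed problem. You correctly compute $\widetilde g(s,z)=\sqrt{\tau}\,g(s\tau,z/\sqrt{\tau})$, but the hypothesis $|g(s,w)|\le\psi_s$ is only assumed for $|w|\le\varepsilon$, so the bound $|\widetilde g(s,z)|\le\sqrt{\tau}\,\psi_{s\tau}$ is available only for $|z|\le\varepsilon\sqrt{\tau}$, which is random unless $\tau$ is bounded below. Since the paper only assumes $0<\tau\le T$ a.s., the constant $\widetilde\varepsilon$ required by (H)(iv) in the $\widetilde{\mathbb F}$-problem is not immediately available; you should either add a lower bound on $\tau$ or argue that the Ankirchner et al.\ construction tolerates a random threshold. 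The BMO step itself is fine: your observation that $\int_0^\cdot\widetilde\psi_s\,d\widetilde W_s$ is the $\phi$-time-change of $\int_0^\cdot\psi_s\,dW_s$ and that BMO is preserved under optional stopping is the right mechanism.
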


\vskip0.5cm\noindent
{\bf References}
\small\vskip0.4cm
\begin{itemize}
\item[{[1]}] Ikeda, N., and Watanabe, S. : \emph {Stochastic
Differential Equations and Diffusion Processes}, North Holland,
Amsterdam, 1981

\item[{[2]}] Pardoux, E., Peng, S.: Adapted solution of a backward
stochastic differential equation. Systems Control Lett. 14(1),
55-61, 1990

\item[{[3]}] Kobylanski, M.: Backward stochastic differential
equations and partial differential equations with quadratic
growth. Ann. Probab. 28(2), 558-602, 2000

\item[{[4]}] Briand,P., Hu, Y: BSDE with quadratic growth and
unbounded terminal value. Probab. Theory Related Fields 136, 2006,
604-618

\item[{[5]}] Briand,P., Hu, Y: Quadratic BSDEs with convex
generators and unbounded terminal conditions.Probab. Theory Relat
Fields, 2008, 141:543-567

\item[{[6]}] Briand,P., Lepeltier, J.-P., San Martin, J.:
One-dimensional BSDE¡¯s whose coefficient is monotonic in $y$ and
non-lipschitz in $z$. Bernoulli 3(1), 80-91, 2007

\item[{[7]}] Briand, P., Delyon, B.,Hu, Y., Pardoux, E., Stoica,
L.: $L^p$ solutions of Backward Stochastic Differential Equations,
Stochastic Processes and their Applications, 108, 109-129, 2003

\item[{[8]}] Briand, P., Hu, Y.: BSDE with quadratic growth and
unbounded terminal value. Probab. Theory Relat Fields 136(4),
604-618, 2006

\item[{[9]}] Darling, R.W.R. and Pardoux, E.: Backward SDE with
random terminal time and applications to semilinear elliptic PDE.
Ann. Proba. Vol. 25, No.3, 1135-1159, 1997

\item[{[10]}] El Karoui, N., Peng S., Quenez M.C.: Backward
Stochastic Differential Equations in Finance,Mathematical Finance
7, 1-71,1997

\item[{[11]}] Blanchet-ScallietCh., Eyraud-LoiselA.,
Royer-CarenziM.: Hedging of Defaultable Contingent Claims using
BSDE with uncertain time horizon, preprint, 2009

\item[{[12]}] Lepeltier, J.-P., San Martin, J.: Existence for BSDE
with superlinear-quadratic coefficient. Stochastics Stochastics
Rep.63(3-4), 227-240, 1998

\item[{[13]}] Patrick Cheridito, H.Mete Soner, Nizar Touz and
Nicolas Victor: Second-Order Backward Stochastic Differential
Equations and Fully Nonlinear Parabolic PDEs, Communications on
Pure and Applied Mathematics, Vol. LX, 0001-0030, 2007

\item[{[14]}] Pardoux, E. and Peng, S.: BSDEs and quasilinear
parabolic PDEs. Lecture Notes in Control and Inform. Sci.
176:200-217. Springer, New York, 1992

\item[{[15]}] Barles, G., Buckdahn, R. and Pardoux, E.: Backward
stochastic differential equations and integral-partial
differential equations. Stochastics Stochastics Rep. 60 pp. 57-83,
1997

\item[{[16]}] S. Ankirchner, et al., On measure solutions of
backward stochastic differential equations, Stochastic Processes
and their Applications 119, 2744-2772, 2009

\end{itemize}
\end{document}